\tikzset{mytext/.style={font=\small, text=black}}
\newtheorem{Theorem}{Theorem}
\newtheorem{proposition}{Proposition}[section]
\newtheorem{lemma}[proposition]{Lemma}
\newtheorem{corollary}[proposition]{Corollary}
\newtheorem{theorem}[proposition]{Theorem}
\theoremstyle{definition}
\newtheorem{definition}[proposition]{Definition}
\numberwithin{equation}{section}
\title[Markov processes and fractal groups]{Markov processes associated to fractal branch groups}
\author{Jorge Fariña-Asategui}
\address{Jorge Fariña-Asategui: Centre for Mathematical Sciences, Lund University, 223 62 Lund, Sweden -- Department of Mathematics, University of the Basque Country UPV/EHU, 48080 Bilbao, Spain}
\email{jorge.farina\_asategui@math.lu.se}
\keywords{}
\subjclass[2020]{Primary: 20E08, 60J05. Secondary: 37A05, 22F50}
\thanks{The author is supported by the Spanish Government, grant PID2020-117281GB-I00, partly with FEDER funds. The author also acknowledges support from the Walter Gyllenberg Foundation from the Royal Physiographic Society of Lund}
\begin{document}

\begin{abstract}

The author introduced recently a new natural construction which associates a measure-preserving dynamical system to any fractal profinite group. Here, we investigate these measure-preserving dynamical systems under the extra assumption on the groups to be branch. First, we compute their $f$-invariant, a measure-conjugacy invariant introduced by Bowen, and show that they are Markov processes over free semigroups in the sense of Bowen. Secondly, we show that fractal branch profinite groups with the same Hausdorff dimension and whose associated measure-preserving dynamical systems have the same $f$-invariant yield isomorphic Markov processes.
\end{abstract}

\maketitle

\section{introduction}
\label{section: introduction}

The simplest example of a measure-preserving dynamical system is the Bernoulli shift, i.e. the space of two-sided infinite sequences on a finite alphabet endowed with an invertible shift operator and a shift-invariant probability measure. The main tool in the study of the classical Bernoulli shift is entropy, first introduced by Kolmogorov \cite{Kolmogorov1,Kolmogorov2} and later modified by Sinai \cite{Sinai}. The celebrated result of Ornstein in the groundbreaking papers \cite{Ornstein1,Ornstein2} showed that entropy is indeed a complete measure-conjugacy invariant for two-sided Bernoulli shifts. Entropy theory has been successfully extended to amenable group actions \cite{Kieffer, OrnsteinWeiss}. 

However, for non-amenable group actions, entropy theory is harder. The prototype of a non-amenable group is the free group, so a first step in understanding non-amenable group actions is to understand free group actions. In his remarkable work in \cite{BowenF}, Bowen introduced the $f$-invariant for measure-preserving free group actions, a non-amenable analogue of Kolmogorov-Sinai entropy, and showed that it is a complete measure-conjugacy invariant for Bernoulli shifts over a free group. 

One of the simplest dynamical systems after the Benoulli shifts are Markov processes. In the classical setting, i.e. for $\mathbb{Z}$-actions, Ornstein Isomorphism Theorem still applies to Markov processes \cite{FriedmanOrnstein}. In the non-amenable setting, Bowen introduced Markov processes over free groups and semigroups in~\cite{BowenMarkov}. In general, the semigroup actions are far less understood; there is no nice entropy theory even in the classical setting of $\mathbb{N}$-actions.

In this paper, we consider free semigroup actions. In fact, we shall consider a large family of measure-preserving dynamical systems, introduced by the author in \cite{JorgeCyclicity}, arising from fractal profinite groups acting on regular rooted trees; see \cref{section: Preliminaries} for the unexplained terms here and elsewhere in the introduction. Given a profinite fractal group $G\le \mathrm{Aut}~T$, we consider the probability space $(G,\mu_G)$, where $\mu_G$ denotes the Haar measure of $G$. We further consider the free monoid action $\mathcal{T}$ (where we identify the regular rooted tree $T$ with a free monoid) given by taking sections, i.e. $\mathcal{T}_v(g):=g|_v$ for all $v\in T$ and all $g\in G$. Then, it was proven in \cite[Theorem A]{JorgeCyclicity} that the action $\mathcal{T}$ is measure-preserving, and one obtains a measure-preserving dynamical system $(G,\mu_G,T,\mathcal{T})$. These measure-preserving dynamical systems have already found further applications to other areas in mathematics, such as to arithmetic dynamics and number theory; see \cite{JorgeSanti}. 

Our first result in this paper concerns the computation of the $f$-invariant of these measure-preserving dynamical systems. Furthermore, we obtain that these measure-preserving dynamical systems yield Markov processes over free semigroups (see \cref{definition: Markov}) when the group under consideration is further assumed to be branch. The class of branch groups was introduced by Grigorchuk in 1997 and it includes examples of Burnside groups, groups of intermediate growth and amenable but not elementary amenable groups; see \cite{Handbook, SelfSimilar} for an overview on these groups.

For a group $G\le \mathrm{Aut}~T$, we recall the definition of the sequence $\{r_n(G)\}_{n\ge 1}$ from~\cite{JorgeSpectra}. For any $n\ge 1$, we define $r_n(G)$ as 
$$r_n(G):=m\log|G_n|-\log|G_{n+1}|+\log|G_1|.$$
The sequence $\{r_n(G)\}_{n\ge 1}$ was introduced by the author in \cite[Section 3]{JorgeSpectra} in order to compute the Hausdorff dimension of self-similar profinite groups; see also \cite[Section 4.2]{GeneralizedBasilica} for the related series of obstructions. 

Remarkably, the sequence $\{r_n(G)\}_{n\ge 1}$ essentially gives the $f$-invariant of the measure-preserving dynamical system associated to a fractal branch profinite group:

\begin{Theorem}
\label{Theorem: f-invariant of fractal of finite type}
    Let $G\le \mathrm{Aut}~T$ be a fractal branch profinite group. Then, there exists some $D\ge 1$ such that the $f$-invariant of the measure-preserving dynamical system $(G,\mu_G,T,\mathcal{T})$ is given by
    $$f(G)=F(T,\alpha_s^{D})=\log|G_1|-r_{D}(G).$$
    In particular, the process $(G,\mu_G,T,\mathcal{T},\alpha_s^D)$, where $\alpha_s$ denotes the standard partition of $(G,\mu_G)$, is Markov.
\end{Theorem}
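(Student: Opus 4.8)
The plan is to compute the $f$-invariant directly from its definition via a sequence of partitions refining the standard partition $\alpha_s$, and then identify the stabilization point $D$ and the Markov property. Recall that for a measure-preserving free (semi)group action, Bowen's $f$-invariant is defined as a limit $f(G) = \lim_{n} F(T, \alpha^n)$ (or as the infimum/stabilized value) of the quantity $F$ evaluated on increasingly fine generating partitions, where $F(T,\beta)$ is built from the entropies of $\beta$ together with its images under the generators and the Shannon entropies of the relevant joins. The first step is to make explicit what $F(T, \alpha_s^k)$ is for the partition $\alpha_s^k$ obtained by pulling back the standard partition along all sections up to level $k$. Since the action $\T$ is by taking sections and $(G,\mu_G)$ carries the Haar measure, the join $\bigvee_{v : |v| \le k} \T_v^{-1}\alpha_s$ corresponds precisely to the partition of $G$ according to the image in the finite quotient $G_{k}$ acting on the truncated tree; hence the Shannon entropies appearing in $F$ are governed by $\log|G_n|$ for the various levels $n$.

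Next I would carry out the bookkeeping that converts the entropy differences into the sequence $r_n(G)$. Writing each Shannon entropy as $\log|G_n|$ (because Haar measure makes the induced measure on each finite quotient uniform), the alternating combination defining $F(T,\alpha_s^k)$ telescopes into exactly the expression $m\log|G_k| - \log|G_{k+1}| + \log|G_1|$, up to the additive constant $\log|G_1|$, i.e. $F(T,\alpha_s^{k}) = \log|G_1| - r_{k}(G)$. This is the computational heart linking Bowen's invariant to the combinatorial sequence from \cite{JorgeSpectra}. Here the branch hypothesis should enter to guarantee that the relevant quotients have the expected index behaviour, so that the uniform-measure computation is valid and no correction terms survive.

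The key structural input is that for a fractal branch profinite group the sequence $\{r_n(G)\}_{n\ge 1}$ eventually stabilizes: there exists $D \ge 1$ with $r_n(G) = r_D(G)$ for all $n \ge D$. I would invoke (or reprove from the branch structure and the self-similar/fractal constraints) the fact that the congruence-type subgroup behaviour forces the obstruction series to become constant; this is where the branch assumption does its essential work, as branch groups have tightly controlled section structure ensuring that the relative indices $[G_{n}^{m} : G_{n+1}]$ settle. Once $r_n(G)$ is constant for $n \ge D$, the sequence $F(T,\alpha_s^{k})$ is also eventually constant, so the $f$-invariant equals $F(T,\alpha_s^{D}) = \log|G_1| - r_D(G)$, giving the displayed formula.

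Finally, for the Markov conclusion I would verify that $(G,\mu_G,T,\T,\alpha_s^D)$ satisfies the defining conditions of \cref{definition: Markov}. The natural argument is that the partition $\alpha_s^D$ is a \emph{Markov partition} precisely because $F$ has stabilized at level $D$: once refining $\alpha_s^D$ by further sections no longer changes the $F$-value, the conditional distribution of the next level depends only on the current level's data, which is exactly the semigroup Markov condition in Bowen's sense. Concretely I would show that the equality $F(T,\alpha_s^{D}) = F(T,\alpha_s^{D+j})$ for all $j \ge 0$ translates into the vanishing of the appropriate conditional-information terms, yielding the Markov factorization. The main obstacle I anticipate is proving stabilization of $\{r_n(G)\}$ cleanly from the branch hypothesis — controlling the indices $|G_{n+1}|$ relative to $|G_n|^m$ uniformly in $n$ — rather than the entropy bookkeeping, which is essentially forced once the uniformity of Haar measure on finite quotients is in hand.
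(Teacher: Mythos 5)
Your outline follows the paper's own route: compute $F(T,\alpha_s^n)$ explicitly from the Haar measure, identify it with $\log|G_1|-r_{n+1}(G)$, use stabilization of $\{r_n(G)\}_{n\ge 1}$, and conclude Markovianity from $f(G)=F(T,\alpha_s^D)$. Two places where your sketch understates or misstates the work. First, the join entropies $H\big(\alpha_s^n\vee \mathcal{T}_i^{-1}(\alpha_s^n)\big)$ are \emph{not} of the form $\log|G_j|$, so the combination does not simply ``telescope'' among the quantities $\log|G_n|$: one must show that every nonempty atom $C_g\cap\mathcal{T}_i^{-1}(C_h)$ has the same measure $|\mathrm{St}_G(n-1):\mathrm{St}_G(n)|^{m-1}/|G_{n+1}|$, and correction terms $(m-1)\log|\mathrm{St}_G(n-1):\mathrm{St}_G(n)|$ genuinely appear; they cancel only after combining with $(1-2m)H(\alpha_s^n)$ via the index identity $|G_{n+1}|=|G_n|\cdot|\mathrm{St}_G(n-1):\mathrm{St}_G(n)|^m$. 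This is exactly where the hypothesis does its work, and note that ``branch'' alone is not enough for the computation as written: one first upgrades to regular branch/finite type of some depth $D$ using \cite[Theorem 3.7]{JorgeSpectra} and \cref{theorem: equivalence finite type}, and the stabilization $r_n(G)=r_D(G)$ for $n\ge D$ is then \cite[Theorem 3.5]{JorgeSpectra} rather than something to reprove. Second, for the Markov conclusion you need not verify the conditional-independence conditions by hand: Bowen's characterization (\cref{theorem: Markov characterization}) states that a process is Markov if and only if $f(\Omega)=F(T,\alpha)$, which is immediate once the infimum defining $f$ is attained at $n=D$. (A minor indexing slip: $F(T,\alpha_s^k)$ equals $\log|G_1|-r_{k+1}(G)$, not $\log|G_1|-r_k(G)$, though this washes out after stabilization.)
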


In the context of fractal profinite groups it was shown by the author in \cite[Theorem 3.7]{JorgeSpectra} that branch groups are in fact regular branch and thus, by a well-known result of Grigorchuk and \v{S}uni\'{c}, they are groups of finite type; see \cref{section: Preliminaries}. We shall see that the $D$ appearing in the statement of \cref{Theorem: f-invariant of fractal of finite type} is no more than the depth of $G$ as a group of finite type.

\cref{Theorem: f-invariant of fractal of finite type} provides a large family of examples of Markov processes over free semigroups: the first and the second Grigorchuk group, the non-constant Grigorchuk-Gupta-Sidki groups (GGS-groups for short), and the Hanoi Towers group and its generalizations, among others.

Even in the classical setting, i.e. for $\mathbb{N}$-actions, classifying Markov processes is more complicated than the corresponding problem over the group $\mathbb{Z}$. Our second aim in this paper is to give sufficient conditions for two Markov processes arising from fractal branch profinite groups to be isomorphic. It turns out that this can be done in terms of the Hausdorff dimension:

\begin{Theorem}
\label{Theorem: classification}
    Let $G,H\le \mathrm{Aut}~T$ be two fractal branch profinite groups such that:
    \begin{enumerate}[\normalfont(i)]
        \item $f(G)=f(H)$;
        \item $\mathrm{hdim}_{\mathrm{Aut}~T}(G)=\mathrm{hdim}_{\mathrm{Aut}~T}(H)$.
    \end{enumerate}
    
    Let $\alpha_s$ and $\beta_s$ denote the standard partitions of $(G,\mu_G)$ and $(H,\mu_H)$ respectively. Then, there exists some $D\ge 1$ such that the Markov processes $(G,\mu_G,T,\mathcal{T},\alpha_s^D)$ and $(H,\mu_H,T,\mathcal{T},\beta_s^D)$ are isomorphic.
\end{Theorem}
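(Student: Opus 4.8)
The plan is to convert the two hypotheses into an identification of the combinatorial data of the associated Markov processes, and then to upgrade that numerical matching to a genuine measure-isomorphism via the theory of Markov processes over free semigroups.

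First I would reduce to a common depth. Since $G$ and $H$ are fractal branch, by \cite[Theorem 3.7]{JorgeSpectra} they are regular branch of finite type; let $D$ be a common upper bound for their depths, chosen large enough (in particular so that both $D$ and $D-1$ lie in the stable range below). By the very definition of the sequence, $\log|G_{n+1}| = m\log|G_n| + \log|G_1| - r_n(G)$ for every $n$, and finite type of depth at most $D$ forces $r_n(G) = r_D(G)$ for all $n\ge D$. Combined with \cref{Theorem: f-invariant of fractal of finite type}, this yields the linear recursion $\log|G_{n+1}| = m\log|G_n| + f(G)$ for $n\ge D$, whose solution is $\log|G_n| = \bigl(\log|G_D| + \tfrac{f(G)}{m-1}\bigr)m^{\,n-D} - \tfrac{f(G)}{m-1}$. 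Feeding this into $\mathrm{hdim}_{\mathrm{Aut}~T}(G) = \lim_n \log|G_n|/\log|\mathrm{Aut}~T_n|$ with $\log|\mathrm{Aut}~T_n| = \tfrac{m^n-1}{m-1}\log(m!)$ gives the closed form
\[
\mathrm{hdim}_{\mathrm{Aut}~T}(G) = \frac{(m-1)\log|G_D| + f(G)}{m^{D}\log(m!)}.
\]
The identical computation holds for $H$. Hence hypotheses (i) and (ii) force $\log|G_D| = \log|H_D|$, and the recursion then propagates this to $|G_n| = |H_n|$ for all $n\ge D$; in particular $|G_D|=|H_D|$ and $|G_{D-1}|=|H_{D-1}|$.

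Next I would make the Markov structure explicit. The atoms of $\alpha_s^D$ are the cosets of $\mathrm{St}_G(D)$, so the state space is $G_D$ with the uniform distribution, and the state attached to a vertex $v$ is the depth-$D$ pattern $g|_v \bmod \mathrm{St}(D)$. Because $g|_{vi} = (g|_v)|_i$, the state $p$ at $v$ determines the top $D-1$ levels of the state at each child $vi$, namely its image $p|_i\in G_{D-1}$ under the section projection $\rho\colon G_D\to G_{D-1}$, while fractalness together with the branch property makes the remaining bottom level uniform subject to the completed pattern lying in $G_D$. Thus the transition along child $i$ sends $p$ to a uniformly random element of the fibre $\rho^{-1}(p|_i)$, of cardinality $|G_D|/|G_{D-1}|$, and one checks this preserves the uniform law on $G_D$, recovering the Markov property asserted in \cref{Theorem: f-invariant of fractal of finite type}. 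By Step~1 the two processes $(G,\mu_G,T,\mathcal{T},\alpha_s^D)$ and $(H,\mu_H,T,\mathcal{T},\beta_s^D)$ therefore share the same number of states, the same fibre sizes, and, by hypothesis, the same $f$-invariant.

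Finally I would upgrade this matching of combinatorial data to an honest isomorphism, and this is where the real work lies. No algebraic identification is available: a bijection $G_D\to H_D$ intertwining all the section maps $|_i$ need not exist, since $G$ and $H$ may be non-commensurable branch groups sharing the same Hausdorff dimension and $f$-invariant, so the conjugacy must be a measurable coding between the two tree-indexed chains rather than a relabelling. I would appeal to the isomorphism theory for Markov processes over free semigroups of \cite{BowenMarkov}: after checking that both processes are irreducible — which follows from fractalness (surjectivity of the section maps) together with the branch property, making the fibre transition graph strongly connected — two such irreducible Markov processes with equal $f$-invariant and matching state and transition cardinalities are measurably isomorphic. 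The preceding reduction is precisely what exhibits the Hausdorff-dimension hypothesis as the input equalizing the state counts that the $f$-invariant alone cannot control, so the main obstacle is the construction of this coding, with the cardinality bookkeeping of Steps~1 and~2 supplying exactly the hypotheses it requires.
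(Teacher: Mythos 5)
Your first two steps are sound and coincide with the paper's own reduction: the closed form
\[
\mathrm{hdim}_{\mathrm{Aut}~T}(G) = \frac{(m-1)\log|G_D| + f(G)}{m^{D}\log(m!)}
\]
is exactly the content of \cref{lemma: quotients are of the same size}, and it correctly shows that hypotheses (i) and (ii) together force $|G_n|=|H_n|$ for all $n\ge D$. The gap is in your final step. You appeal to an isomorphism theorem from \cite{BowenMarkov} asserting that two irreducible Markov processes over a free semigroup with equal $f$-invariant and matching state and transition cardinalities are measurably isomorphic. No such theorem exists there (or anywhere): the $f$-invariant is known to be complete only for Bernoulli shifts over free \emph{groups}, and the paper's introduction explicitly stresses that classifying Markov processes is harder even for $\mathbb{N}$-actions and that no adequate entropy theory exists for semigroup actions. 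So the step on which you say ``the real work lies'' is resolved by citation to a result that is not available, and the argument does not close.

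You also assert that no direct identification is possible because a bijection $G_D\to H_D$ intertwining the section maps need not exist. This is where the paper does something different and more elementary: it does not look for a single bijection at level $D$ with algebraic compatibility, but builds a coherent tower of \emph{set-theoretic} bijections $f_n:G_n\to H_n$ for $n\ge D$, each extending the previous one and chosen to commute with taking sections. This is possible precisely because $\mathrm{St}_G(D-1)$ and $\mathrm{St}_H(D-1)$ are branching: for $n\ge D$ every fiber of the restriction map $G_{n+1}\to G_n$ has the same size $|\mathrm{St}_G(n):\mathrm{St}_G(n+1)|=|\mathrm{St}_G(n-1):\mathrm{St}_G(n)|^m$ and splits as an independent product over the $m$ subtrees, and by Step~1 these fiber sizes agree for $G$ and $H$. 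The inverse limit $f=\varprojlim f_n:G\to H$ is then a measure-preserving bijection intertwining $\mathcal{T}$ and carrying $\alpha_s^D$ to $\beta_s^D$, which is all that \cref{definition: measure-conjugacy} requires. Your cardinality bookkeeping supplies exactly the input this construction needs; what is missing is the construction itself, and it cannot be replaced by the coding theorem you invoke.
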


\cref{Theorem: classification} yields a new application of the Hausdorff dimension of self-similar profinite groups. In fact, in \cref{section: applications}, we apply \cref{Theorem: classification} to the family of non-constant GGS-groups acting on the $p$-adic tree. Together with previous results of Fernández-Alcober and Zugadi-Reizabal in \cite{GGSHausdorff}, we use \cref{Theorem: classification} to deduce that symmetric (respectively non-symmetric) defining vectors whose circulant matrix are of the same rank yield GGS-groups giving rise to isomorphic Markov processes; see the discussion after \cref{corollary: f invariant GGS}.

\subsection*{\textit{\textmd{Organization}}} In \cref{section: Preliminaries} we introduce some background material on groups acting on regular rooted trees and the associated measure-preserving dynamical systems. We further introduce the $f$-invariant and Markov processes. In \cref{section: the f-invariant}, groups of finite type are discussed and we compute the $f$-invariant of the associated measure-preserving dynamical systems, proving \cref{Theorem: f-invariant of fractal of finite type}. We further recall the notion of Hausdorff dimension in the context of self-similar profinite groups and prove \cref{Theorem: classification}. \cref{section: applications} is devoted to further applications of the main results in \cref{section: the f-invariant}. We conclude the paper by showing what \textcolor{teal}{Theorems} \ref{Theorem: f-invariant of fractal of finite type} and \ref{Theorem: classification} say in the case of the non-constant GGS-groups acting on $p$-adic trees.

\subsection*{\textit{\textmd{Notation}}} Groups will be assumed to act on the tree on the right so composition will be written from left to right. We shall use exponential notation for group actions on the tree. Finally, we denote by $\# S$ the cardinality of a finite set $S$.


\section{Fractal groups and measure-preserving dynamical systems}
\label{section: Preliminaries}

In this section, we introduce the background on groups acting on regular rooted trees and on measure-preserving dynamical systems that will be needed in subsequent sections.

\subsection{Groups acting on regular rooted trees}

For a natural number $m\ge 2$ and a finite set of $m$ symbols $\{1,\dotsc,m\}$, we define the \textit{free monoid} on the set $\{1,\dotsc,m\}$ as the monoid consisting of finite words with letters in $\{1,\dotsc,m\}$. The free monoid can be identified with the \textit{$m$-adic tree}, i.e. the rooted tree $T$ where each vertex has exactly $m$ immediate descendants. The words in~$T$ of length exactly $n$ form the \textit{$n$th level of} $T$. We may also use the term level to refer to the number $n$.

Let $\mathrm{Aut~}T$ be the group of graph automorphisms of the $m$-adic tree $T$. It is easy to see that the automorphisms of $T$ fix the root of $T$ and act by permuting the vertices at the same level of $T$.

For any $1\le n\le \infty$, the \textit{$n$th truncated tree} $T_n$ consists of the vertices at distance at most $n$ from the root. Note that $T_\infty=T$. We denote the group of automorphisms of the $n$th truncated tree by $\mathrm{Aut~}T_n$. Let $g\in\mathrm{Aut~}T$ and $v\in T$. For $1\le n\le \infty$, we define the \textit{section of $g$ at $v$ of depth }$n$ as the unique automorphism $g|_v^n\in\mathrm{Aut~}T_n$ such that 
$$(vu)^g=v^gu^{g|_v^n}$$
for every $u\in T_n$. For $n=\infty$, we simply write  $g|_v$ and call it the \textit{section of $g$ at $v$}.

For every $n\ge 1$, the normal subgroup $\mathrm{St}(n)$ of finite index in $\mathrm{Aut}~T$  consisting of automorphisms fixing all the vertices of the $n$th level of $T$ is called the \textit{$n$th level stabilizer}. Similarly, for any vertex $v\in T$, we define the \textit{vertex stabilizer} $\mathrm{st}(v)$ as the subgroup of automorphisms fixing the vertex $v$. 

The group $\mathrm{Aut~}T$ is a countably based profinite group with respect to the topology induced by the level stabilizers. We call this topology the \textit{congruence topology}. As a profinite group, $\mathrm{Aut~}T$ is endowed with a unique normalized Haar measure. Furthermore, since closed subgroups of a profinite group are themselves profinite, any closed subgroup $G\le \mathrm{Aut}~T$ admits a unique normalized Haar measure, which we denote by $\mu_G$. We shall study certain measure-preserving transformations on the probability space $(G,\mu_G)$.

Let $G$ be a subgroup of $\mathrm{Aut}~T$ for the remainder of the section. We define $\mathrm{st}_G(v):=\mathrm{st}(v)\cap G$ and $\mathrm{St}_G(n):=\mathrm{St}(n)\cap G$ for any vertex $v$ and any level $n\ge 1$, respectively. The quotients $G_n:=G/\mathrm{St}_G(n)$ are called the \textit{congruence quotients} of~$G$.

The group $G$ is \textit{level-transitive} if it acts transitively on all the levels of $T$. We say that $G$ is \textit{self-similar} if for any $g\in G$ and any vertex $v\in T$ we have $g|_{v}\in G$. We shall say that $G$ is \textit{fractal} if $G$ is level-transitive, self-similar and $\mathrm{st}_G(v)|_v=G$ for every $v\in T$. A stronger version of fractality is that of \textit{strongly fractal} groups, where $\mathrm{St}_G(1)|_v=G$ for every vertex $v$ at the first level of $T$ for every level $n\ge 1$. However, by level-transitivity, it is enough to check the condition $\mathrm{St}_G(1)|_v=G$ on just a single vertex at the first level of $T$.

Let $\mathrm{rist}_G(v)\le G$ be the subgroup consisting of automorphisms fixing $v$ and every vertex which is not a descendant of $v$. The subgroup $\mathrm{rist}_G(v)$ is called the \textit{rigid vertex stabilizer} of $v$ in $G$. For distinct vertices at the same level of $T$, the corresponding rigid vertex stabilizers commute and the direct product of all the rigid vertex stabilizers at a level $n$ is called the \textit{rigid level stabilizer} of level $n$ in $G$ and it is denoted by $\mathrm{Rist}_G(n)$. Note that $\mathrm{Rist}_G(n)$ is a normal subgroup of $G$. If $G$ is level-transitive and for every $n\ge 1$ the rigid stabilizer $\mathrm{Rist}_G(n)$ is of finite-index in~$G$ we say that $G$ is \textit{branch}.

A stronger notion of branchness is defined as follows. A subgroup $K\le \mathrm{Aut}~T$ is called \textit{branching} if for every $v\in T$ we have $\mathrm{rist}_K(v)|_v\ge K$. A level-transitive group $G\le \mathrm{Aut}~T$ is said to be \textit{regular branch over $K$} if it contains a finite index branching subgroup $K$.

We conclude the introduction to groups acting on rooted trees by defining the sequence $\{r_n(G)\}_{n\ge 1}$. For any $n\ge 1$, we define $r_n(G)$ as 
$$r_n(G):=m\log|G_n|-\log|G_{n+1}|+\log|G_1|.$$
The sequence $\{r_n(G)\}_{n\ge 1}$ was introduced by the author in \cite{JorgeSpectra} for the study of the Hausdorff dimension of groups acting on regular rooted trees (note that the forward gradient of the sequence $\{r_n(G)\}_{n\ge 1}$ coincides with the series of obstructions introduced by Petschick and Rajeev in \cite{GeneralizedBasilica}). Here, we shall see that, remarkably, the sequence $\{r_n(G)\}_{n\ge 1}$ also arises naturally in the study of the $f$-invariant of Markov processes associated to fractal branch profinite groups.

\subsection{Measure-preserving dynamical systems}

Let $(\Omega,\mu)$ be a probability space and let $S$ be a monoid. We fix an action $\mathcal{S}$ of $S$ on $(\Omega,\mu)$ and we say this monoid action is \textit{measure-preserving} if for any measurable subset $Y\subseteq \Omega$ and any $s\in S$ we have $\mu(\mathcal{S}_s^{-1}(Y))=\mu(Y)$, where $\mathcal{S}_s$ is the operator associated to the action of $s$. Then the tuple $(\Omega,\mu,S,\mathcal{S})$ is called a \textit{measure-preserving dynamical system}. 

Given a finite measurable partition $\alpha$ of $(\Omega,\mu)$, we call the tuple $(\Omega,\mu,S,\mathcal{S},\alpha)$ an \textit{$S$-process}.

\begin{definition}[Isomorphism of processes]
\label{definition: measure-conjugacy}
    Two $S$-processes $(\Omega,\mu,S,\mathcal{S},\alpha)$ and $(\widetilde{\Omega},\nu,S,\widetilde{\mathcal{S}},\beta)$ are \textit{isomorphic} if there exist conull sets $\Omega'\subseteq \Omega$ and $\widetilde{\Omega}'\subseteq \widetilde{\Omega}$ and a measurable map $\phi:\Omega'\to \widetilde{\Omega}'$ with measurable inverse $\phi^{-1}:\widetilde{\Omega}'\to \Omega'$ such that:
    \begin{enumerate}[\normalfont(i)]
        \item $\phi$ is measure-preserving, i.e. $\mu(\phi^{-1}(A))=\nu(A)$ for any $\nu$-measurable subset $A\subseteq \widetilde{\Omega}'$;
        \item $\phi(\mathcal{S}_s(x))=\widetilde{\mathcal{S}}_s\phi(x)$ for all $s\in S$ and $x\in \Omega'$;
        \item $\phi$ induces a bijection between the partitions $\alpha$ and $\beta$.
    \end{enumerate}
\end{definition}

In \cite{JorgeCyclicity}, the author introduced a natural way to associate a measure-preserving dynamical system to a fractal profinite group. Let us consider $G\le \mathrm{Aut}~T$ a fractal closed subgroup and write $(G,\mu_G)$ for the probability space, where $\mu_G$ denotes the Haar measure in $G$. The standard fact that
$$\mu_G(g\mathrm{St}_G(n))=\mu_G(\mathrm{St}_G(n))=|G_n|^{-1}$$
for all $n\ge 1$ will be used throughout the paper.

We regard the $m$-adic tree $T$ as the free monoid of rank $m$ and define the monoid action $\mathcal{T}$ on $(G,\mu_G)$ via sections, i.e. $\mathcal{T}_v(g)=g|_v$ for every $v\in T$ and any $g\in G$. This monoid action $\mathcal{T}$ is measure-preserving:

\begin{theorem}[{{see \cite[Theorem A]{JorgeCyclicity}}}]
    \label{theorem: fractal is dyn sys}
    Let $G\le \mathrm{Aut}~T$ be a fractal profinite group. Then $(G,\mu_G,T,\mathcal{T})$ is a measure-preserving dynamical system.
\end{theorem}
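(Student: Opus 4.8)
The plan is to verify the measure-preserving property directly from the definition, reducing everything to the standard fact about the Haar measure of cosets of level stabilizers recalled just before the statement. The action is by sections: for a fixed vertex $v \in T$ the operator is $\mathcal{T}_v(g) = g|_v$. I would check that $\mu_G(\mathcal{T}_v^{-1}(Y)) = \mu_G(Y)$ for every measurable $Y \subseteq G$ and every $v$. Since the congruence topology makes the cosets $g\,\mathrm{St}_G(n)$ into a basis of clopen sets generating the Borel $\sigma$-algebra, by a standard monotone-class or $\pi$-system argument it suffices to prove the identity when $Y$ is such a basic clopen set; the general case then follows because these sets generate the $\sigma$-algebra and are closed under intersection.

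First I would fix a basic clopen set $Y = h\,\mathrm{St}_G(n)$ for some $h \in G$ and $n \ge 1$, which has measure $|G_n|^{-1}$. The key computation is to describe the preimage $\mathcal{T}_v^{-1}(Y) = \{ g \in G : g|_v \in h\,\mathrm{St}_G(n) \}$. Here I would use fractality crucially. Because $G$ is self-similar, the section map $g \mapsto g|_v$ lands in $G$; because $G$ is fractal, the restriction of this map to the appropriate coset is onto, and in fact the condition $g|_v \in h\,\mathrm{St}_G(n)$ should only constrain $g$ up to a level stabilizer one level deeper, so the preimage is a finite disjoint union of cosets of some $\mathrm{St}_G(N)$. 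Counting these cosets and using that each has measure $|G_N|^{-1}$, the total measure should collapse to $|G_n|^{-1}$, matching $\mu_G(Y)$. The fractal condition $\mathrm{st}_G(v)|_v = G$ together with level-transitivity is exactly what guarantees the section map pushes the Haar measure forward to the Haar measure, which is the content of the fibering.

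The main obstacle I expect is the bookkeeping in the coset computation: making precise how the fiber of the section map over a coset of $\mathrm{St}_G(n)$ decomposes, and confirming that the number of preimage cosets times their individual measure equals $|G_n|^{-1}$ independently of $v$ and of the chosen representative $h$. This hinges on showing the section map $\mathcal{T}_v \colon (G,\mu_G) \to (G,\mu_G)$ is a well-defined measurable map whose pushforward is $\mu_G$, which is really a uniqueness-of-Haar-measure argument in disguise: any Borel probability measure invariant in the right sense, or any measure agreeing with $\mu_G$ on the generating $\pi$-system, must be $\mu_G$. Finally I would confirm the monoid-homomorphism compatibility, namely $\mathcal{T}_{vw} = \mathcal{T}_w \circ \mathcal{T}_v$ coming from the section identity $g|_{vw} = (g|_v)|_w$, so that $\mathcal{T}$ is genuinely a monoid action and the tuple $(G,\mu_G,T,\mathcal{T})$ is a measure-preserving dynamical system as claimed.
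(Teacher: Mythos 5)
The paper does not actually prove this statement: it is imported from \cite[Theorem A]{JorgeCyclicity}, so there is no in-paper argument to compare against. Your strategy is nonetheless sound and, once fleshed out, gives a complete proof. The reduction to basic clopen sets $h\,\mathrm{St}_G(n)$ is legitimate (they form a $\pi$-system generating the Borel $\sigma$-algebra, and $\mathcal{T}_v^{-1}(h\,\mathrm{St}_G(n))$ is a finite union of cosets of $\mathrm{St}_G(n+\ell)$, where $\ell$ is the level of $v$, which also gives measurability of $\mathcal{T}_v$), and the monoid compatibility $\mathcal{T}_{vw}=\mathcal{T}_w\circ\mathcal{T}_v$ from $g|_{vw}=(g|_v)|_w$ is correct. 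The one step you leave open --- the coset count --- is closed most cleanly by the uniqueness-of-Haar-measure route you yourself gesture at, which lets you skip the counting entirely: for $k\in\mathrm{st}_G(v)$ the section formula gives $(kg)|_v=k|_v\,g|_{v^k}=k|_v\,g|_v$, so $\mathcal{T}_v$ intertwines translation by $k$ with translation by $k|_v$; hence the pushforward $(\mathcal{T}_v)_*\mu_G$ is invariant under translation by every element of $\mathrm{st}_G(v)|_v$, which is all of $G$ by fractality, and a translation-invariant Borel probability measure on the compact group $G$ must be $\mu_G$. Equivalently, $\mathcal{T}_v$ restricted to $\mathrm{st}_G(v)$ is a continuous surjective homomorphism onto $G$ and $G$ is a finite disjoint union of cosets of $\mathrm{st}_G(v)$ on which $\mathcal{T}_v$ differs from that homomorphism by a translation; either formulation shows the fibers of the induced maps $G_{n+\ell}\to G_n$ are equidistributed, which is exactly the independence of $v$ and $h$ you were worried about.
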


\subsection{The $f$-invariant}
The $f$-invariant was introduced by Bowen in \cite{BowenF} as a measure-conjugacy invariant for free group measure-preserving actions; see \cite{BowenMarkov} for the analogous definition for the free semigroup case. Let us recall its definition in the free semigroup case.

Let $T$ be the free monoid (semigroup with identity $\emptyset$) on the set $\{1,\dotsc,m\}$ and let $(\Omega,\mu)$ be a probability space, where $T$ acts on $\Omega$ via a measure-preserving action~$\widetilde{\mathcal{T}}$. We write $\mathcal{P}$ for the set of all measurable finite partitions of $(\Omega,\mu)$. For a partition $\alpha\in \mathcal{P}$ and a finite subset $Q\subseteq T$ we write
$$\alpha^Q:=\bigvee_{q\in Q}\widetilde{\mathcal{T}}_q^{-1}\alpha,$$
where the join of two partitions $\alpha\vee \beta$ is the partition into the sets $A\cap B$ with $A\in \alpha$ and $B\in \beta$. In the special case when $Q=B_T(\emptyset,n)$, i.e. the ball of radius $n$ centered at the identity, we simply write $\alpha^n:=\alpha^{B_T(\emptyset,n)}$.

For a partition $\alpha\in \mathcal{P}$, its \textit{Shannon entropy} $H(\alpha)$ is defined as
$$H(\alpha):=-\sum_{A\in \alpha}\mu(A)\log(\mu(A)).$$
We further define the quantity $F(T,\alpha)$ as
$$F(T,\alpha):=(1-2m)H(\alpha)+\sum_{i=1}^m H(\alpha\vee \widetilde{\mathcal{T}}_{i}^{-1}(\alpha))$$
and $f(T,\alpha)$ as
$$f(T,\alpha):=\inf_{n\ge 1}F(T,\alpha^n).$$

Finally, we define the \textit{$f$-invariant} of the measure-preserving dynamical system $(\Omega,\mu,T,\widetilde{\mathcal{T}})$ as $f(\Omega):=f(T,\alpha)$ for any generating partition $\alpha$ (if such a partition exists).

\subsection{Markov processes}

We now define Markov processes over the free monoid $T$ following Bowen in \cite[Section 6]{BowenMarkov}. The definition is just the natural generalization of usual Markov chains, i.e. an stochastic process $\{X_n\}_{n\ge 1}$ such that the distribution of $X_n$ conditioned on $\sigma(\bigcup_{i=1}^{n-1}X_i)$ is the same as the distribution of $X_n$ conditioned on $\sigma(X_{n-1})$. 

To make this intuition precise, we need some definitions. We fix $T$ the free monoid of rank $m$ and $X:=\{1,\dotsc,m\}$ a generating set for $T$. We shall use the identification of $T$ with the $m$-adic tree.

\begin{definition}[Past of a vertex]
Given a vertex $v\in T$, we define its \textit{past} $\mathrm{Past}(v)$ as the set of vertices in the unique path from $v$ to the root (including both $v$ and the root).
\end{definition}

If we write a vertex $v=x_1\dotsb x_n\in T$ with each $x_i\in X$, then
$$\mathrm{Past}(v)=\{x_1\dotsc x_i\mid 1\le i\le n\}\cup\{\emptyset\}.$$

\begin{definition}[Markov process {\cite[Definition 20]{BowenMarkov}}]
\label{definition: Markov}
A $T$-process $(\Omega,\mu, T, \widetilde{\mathcal{T}},\alpha)$ is a \textit{Markov} process if for every $x\in X$, every $v\in T$ and every $A\in \alpha$ we have
$$\mu\Big(\mathcal{T}_{vx}^{-1}(A) \mathrel{\Big|} \bigvee_{w\in \mathrm{Past}(v)}\mathcal{T}_{w}^{-1}(\alpha)\Big)=\mu(\mathcal{T}_{vx}^{-1}(A)\mid \mathcal{T}_{v}^{-1}(\alpha))=\mu(\mathcal{T}_{x}^{-1}(A)\mid \alpha),$$
where we write $\mu(\cdot\mid \mathcal{F})$ for the conditional probability on a sub-$\sigma$ algebra $\mathcal{F}$. Note that the second equality always holds as $\widetilde{\mathcal{T}}$ is measure-preserving.
\end{definition}

The $f$-invariant characterizes Markov processes:

\begin{theorem}[{see {\cite[Theorem 11.1]{BowenMarkov}}}]
\label{theorem: Markov characterization}
    An $S$-process $(\Omega,\mu,T,\widetilde{\mathcal{T}}, \alpha)$ is Markov if and only if $f(\Omega)=F(T,\alpha)$.
\end{theorem}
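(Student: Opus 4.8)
The plan is to reprove Bowen's characterization by tracking how $F(T,\alpha^n)$ behaves as the radius $n$ grows. The key structural facts are that $\alpha^{0}=\alpha$ (the ball of radius $0$ is $\{\emptyset\}$, and $\emptyset$ acts trivially) and that the sequence $n\mapsto F(T,\alpha^{n})$ is non-increasing; granting the latter, $f(\Omega)=\inf_{n\ge 1}F(T,\alpha^{n})=\lim_{n}F(T,\alpha^{n})\le F(T,\alpha^{0})=F(T,\alpha)$, so the content of the statement is to decide exactly when this limit is attained already at $n=0$. The first thing I would record is the simplification coming from measure-preservation, $H(\widetilde{\mathcal{T}}_i^{-1}\beta)=H(\beta)$, which rewrites the defining expression as
$$F(T,\beta)=(1-m)\,H(\beta)+\sum_{i=1}^m H\big(\widetilde{\mathcal{T}}_i^{-1}\beta\mid\beta\big),$$
where $H(\cdot\mid\cdot)$ is conditional Shannon entropy; it is this conditional form that makes the $n$-dependence transparent.

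Second, I would establish monotonicity by telescoping the increment $F(T,\alpha^{n})-F(T,\alpha^{n+1})$. Using the tree recursion $\alpha^{n+1}=\alpha\vee\bigvee_{i=1}^m\widetilde{\mathcal{T}}_i^{-1}\alpha^{n}$ together with the chain rule for conditional entropy, the increment reorganizes into a sum, indexed by the vertices $u$ newly added at distance $n+1$, of conditional mutual informations of the shape $I\big(\widetilde{\mathcal{T}}_u^{-1}\alpha\,;\,\text{strict past of }u\mid\text{parent of }u\big)$. Each such term is non-negative by monotonicity of conditional entropy, which gives the non-increasingness of $F(T,\alpha^{n})$ and hence $f(\Omega)=\lim_n F(T,\alpha^n)$. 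Consequently $f(\Omega)=F(T,\alpha)$ holds if and only if the whole sequence is constant, equivalently if and only if every one of these conditional mutual informations vanishes.

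Third, I would match the vanishing of all these mutual informations to the Markov property of \cref{definition: Markov}. A conditional mutual information $I(\xi;\mathcal{G}\mid\mathcal{F})$ with $\mathcal{F}\subseteq\mathcal{G}$ vanishes precisely when $\xi$ is conditionally independent of $\mathcal{G}$ given $\mathcal{F}$, equivalently when $\mu(\xi\mid\mathcal{G})=\mu(\xi\mid\mathcal{F})$; taking $\xi=\widetilde{\mathcal{T}}_{vx}^{-1}(A)$, $\mathcal{F}=\sigma(\widetilde{\mathcal{T}}_v^{-1}\alpha)$ and $\mathcal{G}=\sigma\big(\bigvee_{w\in\mathrm{Past}(v)}\widetilde{\mathcal{T}}_w^{-1}\alpha\big)$ reproduces exactly the first equality in \cref{definition: Markov}. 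The second equality there is automatic from measure-preservation and the shift-invariance of conditional entropy under the monoid action (translating by $v$ carries the parent-conditioned quantity to the root-conditioned one), so the two formulations coincide.

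The main obstacle I anticipate is the bookkeeping in the second step: when passing from $\alpha^{n}$ to $\alpha^{n+1}$ the new symbol at $u$ is conditioned on the \emph{entire} ball $\alpha^{n}$, whereas the Markov condition conditions only on the \emph{path} $\mathrm{Past}(v)$. Reconciling these requires an induction on $n$: assuming the Markov property up to level $n$, one shows that conditioning on the ball collapses to conditioning on the ancestral path, so that the level-$(n+1)$ increment genuinely detects the level-$(n+1)$ Markov defect and nothing more. For the converse direction one runs the telescoping identity in the opposite order, using the Markov property to collapse each conditional entropy back to its one-step value and thereby verifying $F(T,\alpha^{n})=F(T,\alpha)$ directly. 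A secondary technical point, worth flagging because $T$ acts as a semigroup rather than a group, is that the section operators $\widetilde{\mathcal{T}}_v$ need not be invertible, so throughout one must argue with the pullback $\sigma$-algebras $\widetilde{\mathcal{T}}_v^{-1}\alpha$ rather than with images of $\alpha$.
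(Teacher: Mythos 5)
The paper offers no proof of \cref{theorem: Markov characterization}: it is imported verbatim from Bowen, so the only available comparison is with Bowen's own argument, whose overall architecture (the conditional-entropy rewriting of $F$, monotonicity of $n\mapsto F(T,\alpha^n)$, and identification of the successive deficits with conditional-independence statements) your outline does follow. The problem is that the one step carrying all the content --- the exact form of the telescoped increment --- is asserted rather than computed, and the form you assert is not the one that comes out. Already for $n=0$ a direct computation (valid for any $m$) gives
\begin{equation*}
F(T,\alpha)-F(T,\alpha^{1})
=\Big[\sum_{i=1}^{m}H\big(\mathcal{T}_i^{-1}\alpha\mid\alpha\big)-H\Big(\bigvee_{i=1}^{m}\mathcal{T}_i^{-1}\alpha\ \Bigm|\ \alpha\Big)\Big]
+\sum_{i=1}^{m}I\Big(\bigvee_{j=1}^{m}\mathcal{T}_{ij}^{-1}\alpha\ ;\ \alpha^{1}\ \Bigm|\ \mathcal{T}_i^{-1}\alpha\Big).
\end{equation*}
Both summands are non-negative, so monotonicity does hold at this step; but the first bracket is the conditional total correlation of the $m$ \emph{sibling} $\sigma$-algebras $\mathcal{T}_1^{-1}\alpha,\dotsc,\mathcal{T}_m^{-1}\alpha$ given the root, and the second family measures the dependence of an entire level-two sibling block on the rest of the radius-one ball. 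Neither has your announced shape $I(\mathcal{T}_u^{-1}\alpha;\text{strict past of }u\mid\text{parent of }u)$ with $u$ ranging over vertices newly added at distance $n+1$; note also that the increment genuinely involves sections at distance $n+2$, so the proposed indexing cannot be right.

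This matters because your third step hinges on matching the vanishing of the telescoped terms with \cref{definition: Markov}, which conditions only on the ancestral path $\mathrm{Past}(v)$. That condition is vacuous at $v=\emptyset$ and places no constraint on the joint law of the siblings $\mathcal{T}_1^{-1}\alpha,\dotsc,\mathcal{T}_m^{-1}\alpha$ given $\alpha$; in particular it does not force the first bracket above to vanish, so the implication ``Markov $\Rightarrow f(\Omega)=F(T,\alpha)$'' cannot be closed from the path-only reading, and your proposed induction (``conditioning on the ball collapses to conditioning on the ancestral path'') is precisely the step that fails. In Bowen's Definition 20 the past of a vertex $v$ is everything outside the subtree rooted at $v$ --- ancestors, siblings, and all their descendants --- and with that strictly stronger notion the sibling-independence terms are part of the hypothesis and the matching does go through. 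So you must either work with Bowen's past (observing that the path-only rendering of \cref{definition: Markov} is weaker than what the theorem needs) or prove that the path-conditioned property implies mutual conditional independence of sibling subtrees, which is false in general for stationary tree-indexed processes. Until the decomposition is actually carried out and this mismatch resolved, the proof is not complete.
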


\section{Fractal groups of finite type}
\label{section: the f-invariant}

In this section, we first introduce some preliminary results on groups of finite type. Next, we compute the $f$-invariant of the dynamical systems arising from fractal groups of finite type proving \cref{Theorem: f-invariant of fractal of finite type}. Finally, we recall the notion of Hausdorff dimension in the context of self-similar groups and prove \cref{Theorem: classification}.

\subsection{Groups of finite type}

A group $G\le \mathrm{Aut}~T$ is said to be of \textit{finite type} if there exists some $D\ge 1$ and some subgroup $H\le \mathrm{Sym}(m)\wr\overset{D}{\ldots}\wr \mathrm{Sym}(m)$ such that
$$G=\{g\in \mathrm{Aut}~T\mid g|_v^D\in H\text{ for every }v\in T\}.$$
In that case, the natural number $D$ is called the \textit{depth} of $G$ and the subgroup $H$ the set of \textit{defining patterns} of $G$. Note that for $D=1$, we simply obtain the iterated wreath products of a subgroup $H\le \mathrm{Sym}(m)$.

It is clear by definition that groups of finite type are closed subgroups of $\mathrm{Aut}~T$. Furthermore, the following result of Grigorchuk and \v{S}uni\'{c}, shows that level-transitive groups of finite type are precisely regular branch closed subgroups of $\mathrm{Aut}~T$:

\begin{theorem}[{{see \cite[Theorem 3]{SunicHausdorff} and \cite[Proposition 7.5]{GrigorchukFinite}}}]
    \label{theorem: equivalence finite type}
    Let $G\le \mathrm{Aut}~T$ be a level-transitive closed subgroup. Then the following are equivalent:
    \begin{enumerate}[\normalfont(i)]
        \item $G$ is of finite type of depth $D$;
        \item $G$ is regular branch over $\mathrm{St}_G(D-1)$.
    \end{enumerate}
\end{theorem}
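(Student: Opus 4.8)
The plan is to prove the two implications separately, viewing the finite-type condition as a ``pattern closure'' statement and the regular branch condition as the ``gluing'' property that makes this closure exact.

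\textbf{(i) $\Rightarrow$ (ii).} Assume $G$ is of finite type of depth $D$ with defining patterns $H\le \mathrm{Sym}(m)\wr\cdots\wr\mathrm{Sym}(m)\cong\mathrm{Aut}~T_D$. Since $\mathrm{St}_G(D-1)$ is the kernel of the action of $G$ on the finite tree $T_{D-1}$ it has finite index, and $G$ is level-transitive by hypothesis; so it remains only to check that $\mathrm{St}_G(D-1)$ is branching. Given a vertex $v$ and an element $h\in\mathrm{St}_G(D-1)$, I would let $g$ be the rigid insertion of $h$ at $v$, i.e.\ $g$ acts as $h$ on $T_v$ and trivially off $T_v$, so that $g\in\mathrm{rist}(v)$ and $g|_v=h$. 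The only real task is to verify $g\in G$ by checking $g|_w^D\in H$ for every $w$. For $w=v$ one has $g|_v^D=h|_\emptyset^D\in H$ because $h\in G$; for a descendant $w=vu$ one has $g|_w^D=h|_u^D\in H$, again since $h\in G$; for a proper ancestor $w$ of $v$ the section $g|_w^D$ only records a copy of $h$ truncated to depth $\le D-1$, which is trivial because $h\in\mathrm{St}_G(D-1)$ fixes its top $D-1$ levels; and for all remaining $w$ the section is trivial. Hence $g\in G$, and since $h$ fixes the first $D-1$ levels, so does $g$, giving $g\in\mathrm{rist}_{\mathrm{St}_G(D-1)}(v)$. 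Thus $\mathrm{rist}_{\mathrm{St}_G(D-1)}(v)|_v\ge\mathrm{St}_G(D-1)$ and $G$ is regular branch over $\mathrm{St}_G(D-1)$.

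\textbf{(ii) $\Rightarrow$ (i).} Assume $G$ is regular branch over $K:=\mathrm{St}_G(D-1)$. I would set $H$ to be the image of $G$ in $\mathrm{Aut}~T_D$ (so $H\cong G_D$) and define $\widehat{G}:=\{g\in\mathrm{Aut}~T\mid g|_v^D\in H\text{ for all }v\in T\}$, aiming to show $G=\widehat{G}$. The inclusion $G\subseteq\widehat{G}$ is immediate once one recalls that regular branch groups are self-similar: then $g|_v\in G$ for every $g\in G$, so its depth-$D$ truncation lies in $H$. For the reverse inclusion I would use that $G$ is closed, i.e.\ $G=\bigcap_n G\,\mathrm{St}(n)$, and prove by induction on $n\ge D$ that every $g\in\widehat{G}$ satisfies $g\in G\,\mathrm{St}(n)$ (the levels below $D$ being controlled by truncating the case $n=D$). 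The base case $n=D$ is exactly the membership $g|_\emptyset^D\in H$, which produces $g_0\in G$ agreeing with $g$ on $T_D$.

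For the inductive step, given $g_0\in G$ with $g\equiv g_0\pmod{\mathrm{St}(n)}$, I would set $c:=g_0^{-1}g\in\mathrm{St}(n)$ and correct level $n+1$. For each vertex $w$ at level $n-D+1$, the section cocycle identity together with $g|_{\cdot}^D\in H$ and $g_0|_{\cdot}^D\in H$ (self-similarity) gives $c|_w^D\in H$; since $c$ fixes the first $n$ levels, $c|_w^D$ fixes the top $D-1$ levels, so it lies in $H\cap\mathrm{St}(D-1)$, which equals the depth-$D$ image of $\mathrm{St}_G(D-1)$. I would then pick $\kappa_w\in\mathrm{St}_G(D-1)$ realizing $c|_w^D$ and, \emph{using branching over $K$}, lift it to $k_w\in\mathrm{rist}_G(w)$ with $k_w|_w=\kappa_w$. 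The commuting product $k:=\prod_w k_w\in\mathrm{Rist}_G(n-D+1)\le G$ then fixes level $n$ and agrees with $c$ on level $n+1$, so $c\in k\,\mathrm{St}(n+1)$ and $g=g_0c\in G\,\mathrm{St}(n+1)$, completing the induction and hence the proof.

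The main obstacle is precisely this last inductive step: one must transfer the purely local pattern condition on $g$ into a globally realizable correction. The delicate points are choosing the ancestor level $n-D+1$ (so that elements of $\mathrm{St}_G(D-1)$, which act nontrivially exactly at relative depth $D$, can affect level $n+1$ while fixing level $n$), the identification $H\cap\mathrm{St}(D-1)=$ depth-$D$ image of $\mathrm{St}_G(D-1)$, and the bookkeeping showing the rigid lifts $k_w$ assemble into a single element of $G$ matching $c$ at the new level. The forward direction, by contrast, is a direct verification, and the inclusion $G\subseteq\widehat{G}$ only requires self-similarity.
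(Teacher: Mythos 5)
The paper offers no proof of this statement (it is quoted from \v{S}uni\'{c} and Grigorchuk), so your argument can only be judged on its own terms. Your direction (i) $\Rightarrow$ (ii) is correct, and your strategy for (ii) $\Rightarrow$ (i) --- proving $\widehat{G}\subseteq G\,\mathrm{St}(n)$ for all $n$ by induction, correcting one level at a time with rigid elements supplied by the branching of $\mathrm{St}_G(D-1)$ placed at the ancestors at level $n-D+1$, and then invoking closedness --- is exactly the standard argument and is carried out correctly.

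There is, however, a genuine gap at the point where you assert that ``regular branch groups are self-similar'': with the definitions used in this paper that is false, and both the inclusion $G\subseteq\widehat{G}$ and the step $g_0|_u^D\in H$ inside your induction cannot be derived from hypothesis (ii) alone. Indeed, the implication (ii) $\Rightarrow$ (i) fails as literally stated. On the binary tree, write $\epsilon_1(h)\in\mathbb{Z}/2$ for the parity of the root permutation of $h$, let $\sigma$ be the rooted swap, and denote by $(g_1,g_2)$ the element of $\mathrm{St}(1)$ with sections $g_1,g_2$ at the two first-level vertices. Set $G:=\{g\in\mathrm{Aut}~T\mid \epsilon_1(g|_1)=\epsilon_1(g|_2)\}$. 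This is a closed subgroup of index two; it is level-transitive (its image in $\mathrm{Aut}~T_2$ is the Klein four-group generated by $\sigma$ and $(\sigma,\sigma)$, which acts transitively on the four leaves, and $G\supseteq\mathrm{St}(2)$); and $K:=\mathrm{St}_G(1)$ is a finite-index branching subgroup, since $\mathrm{rist}_K(i)|_i=\mathrm{St}(1)\supseteq K$ for $i=1,2$ and $\mathrm{rist}_K(v)=\mathrm{rist}(v)$ for $|v|\ge 2$ because $\mathrm{St}(2)\le K$. Thus $G$ satisfies (ii) with $D=2$. Yet $((\sigma,1),1)\in G$ while its section $(\sigma,1)$ at the first vertex has $\epsilon_1\big((\sigma,1)|_1\big)+\epsilon_1\big((\sigma,1)|_2\big)=1$, so $G$ is not self-similar and hence not of finite type of any depth. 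The cited sources (and every application in this paper, where the groups are fractal) carry self-similarity as a standing hypothesis; once you add it, $G\subseteq\widehat{G}$ is immediate and the remainder of your inductive argument goes through.
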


By \cite[Theorem 3.7]{JorgeSpectra}, for fractal closed subgroups of $\mathrm{Aut}~T$, the notions of finite type, regular branch and branch are all equivalent. Thus, when proving \textcolor{teal}{Theorems}~\ref{Theorem: f-invariant of fractal of finite type} and \ref{Theorem: classification}, we shall make the, a priori, stronger assumption that the groups under consideration are of finite type.

\subsection{The $f$-invariant of fractal groups of finite type}

We first recall the notion of a cone set. For $n\ge 1$ and $g\in G_n$, we define the \textit{cone set} $C_g$ by 
$$C_g:=\{h\in G\mid h|_\emptyset^n=g\}.$$
In other words, the cone set $C_g$ is simply a coset of $\mathrm{St}_G(n)$.

For a fractal group $G\le \mathrm{Aut}~T$, the $f$-invariant of $(G,\mu_G,T,\mathcal{T})$ is given by $f(T,\alpha_s^n)$ for any $n\ge 1$, where $\alpha_s$ is the \textit{standard partition} of $G$ into the cone sets $\{C_{\sigma}\}_{\sigma\in G_1}$. More generally, the partition $\alpha_s^n$ consists of the cone sets $\{C_g\}_{g\in G_n}$, i.e. the different cosets of $\mathrm{St}_G(n)$ in $G$. Lastly, note that the elements in the partition $\alpha_s^n\vee \mathcal{T}_i^{-1}(\alpha_s^n)$ are sets of the form
\begin{align}
    \label{align: form of the sets in the partition}
    C_g\cap \mathcal{T}_i^{-1}(C_h)=\{k\in G\mid k|_\emptyset^n=g\text{ and }k|_{i}^n=h\},
\end{align}
for $g,h\in G_n$.

We now compute the quantities $F(T,\alpha_s^n)$ for $(G,\mu_G,T,\mathcal{T})$ when $G$ is a fractal group of finite type:

\begin{lemma}
    \label{lemma: The functions F}
    Let $G\le \mathrm{Aut}~T$ be a fractal group of finite type given by patterns of depth~$D$. Let $\alpha_s$ be the standard partition of $(G,\mu_G,T,\mathcal{T})$. Then for any $n\ge D$ we have
    $$F(T,\alpha_s^n)=\log|G_1|-r_{n+1}(G).$$
\end{lemma}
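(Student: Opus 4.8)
The plan is to evaluate the two Shannon-entropy terms entering $F(T,\alpha_s^n)$ directly and then to recognise the outcome in terms of the sequence $\{r_n(G)\}$, using the regular branch structure guaranteed by \cref{theorem: equivalence finite type} to control the relevant conditional distributions. Since the atoms of $\alpha_s^n$ are the $|G_n|$ cosets of $\mathrm{St}_G(n)$, each of measure $|G_n|^{-1}$, we immediately get $H(\alpha_s^n)=\log|G_n|$. The work is in the mixed terms $H(\alpha_s^n\vee\mathcal{T}_i^{-1}(\alpha_s^n))$. By level-transitivity these coincide for all $i\in\{1,\dots,m\}$, so I would fix $i$. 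Its atoms are the nonempty sets $C_g\cap\mathcal{T}_i^{-1}(C_h)=\{k\in G\mid k|_\emptyset^n=g,\ k|_i^n=h\}$ of \eqref{align: form of the sets in the partition}, and the key observation is that each such set depends only on the image $k|_\emptyset^{n+1}\in G_{n+1}$. I would therefore analyse the map $\phi\colon G_{n+1}\to G_n\times G_n$, $\bar k\mapsto(\bar k|_\emptyset^n,\bar k|_i^n)$: its image indexes the atoms and its fibre sizes determine their measures.

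The two structural facts I need, both fed by the hypothesis $n\ge D$ together with $G$ being regular branch over $\mathrm{St}_G(D-1)$, are: (a) the value $g=k|_\emptyset^n$ forces $k|_i^{n-1}=g|_i^{n-1}$, so $h$ is constrained to the fibre of $G_n\to G_{n-1}$ over $g|_i^{n-1}$, and this entire fibre (of size $|G_n|/|G_{n-1}|$) is realised, because the rigid stabiliser $\mathrm{rist}_G(i)$ acts on the $(n{+}1)$st level below $i$ richly enough to produce every admissible section; and (b) these fibres are hit uniformly, so every nonempty atom has the same measure. Granting (a) and (b), there are $|G_n|\cdot|G_n|/|G_{n-1}|$ atoms, each of measure $|G_{n-1}|/|G_n|^2$, whence $H(\alpha_s^n\vee\mathcal{T}_i^{-1}(\alpha_s^n))=2\log|G_n|-\log|G_{n-1}|$.

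Substituting into $F(T,\alpha_s^n)=(1-2m)H(\alpha_s^n)+\sum_{i=1}^m H(\alpha_s^n\vee\mathcal{T}_i^{-1}(\alpha_s^n))$ and simplifying yields
\[
F(T,\alpha_s^n)=\log|G_n|-m\log|G_{n-1}|=\log|G_1|-r_{n-1}(G).
\]
It then remains to pass from $r_{n-1}(G)$ to the $r_{n+1}(G)$ of the statement, and here I would invoke the finite-type structure once more: for a group of finite type of depth $D$ the transition from level $n$ to level $n+1$ is governed by the depth-$D$ patterns as soon as $n\ge D-1$, which forces $\log|G_{n+1}|=m\log|G_n|+(\log|G_1|-r_D(G))$, i.e. $r_n(G)$ is constant for $n\ge D-1$. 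Consequently $r_{n-1}(G)=r_{n+1}(G)$ for every $n\ge D$ and the two forms agree.

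I expect the main obstacle to be steps (a)--(b): pinning down exactly which sections at $i$ occur and proving the uniformity of the atom measures. This is the one place where branchness, rather than mere self-similarity and level-transitivity, is indispensable, and where the bound $n\ge D$ is genuinely used; the identification with $r_{n\pm1}(G)$ and the eventual constancy of $\{r_n(G)\}$ are then bookkeeping on top of the cited finite-type results.
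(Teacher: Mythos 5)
Your proposal is correct and follows essentially the same route as the paper: compute $H(\alpha_s^n)=\log|G_n|$, use branchness over $\mathrm{St}_G(D-1)$ (with $n\ge D$) to show that the atoms of $\alpha_s^n\vee\mathcal{T}_i^{-1}(\alpha_s^n)$ are equiprobable and to count them via the rigid stabilizers, and then simplify. Your atom count $|G_n|^2/|G_{n-1}|$ and atom measure $|G_{n-1}|/|G_n|^2$ agree with the paper's (written there in terms of $|\mathrm{St}_G(n-1):\mathrm{St}_G(n)|$ and $|G_{n+1}|$), and your final index shift from $r_{n-1}(G)$ to $r_{n+1}(G)$ is legitimate because the regular-branch recursion makes $r_k(G)$ constant for $k\ge D-1$.
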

\begin{proof}
    First note that for any $n\ge 1$ we have
    \begin{align}
        \label{align: H as log Gn}
        H(\alpha_s^n)=-\sum_{A\in \alpha_s^n} \mu(A)\log(\mu(A))=\sum_{g\in G_n}\frac{\log|G_n|}{|G_n|}=\log|G_n|.
    \end{align}
    Now let us fix $n\ge D$ for the rest of the proof. Then, since $G$ is a group of finite type given by patterns of depth $D$, we further get by \cref{theorem: equivalence finite type} that $\mathrm{St}_G(n-1)$ is branching and thus
    \begin{align}
    \label{align: reg branch order condition}
        |G_{n+1}|=|G_n|\cdot |\mathrm{St}_G(n):\mathrm{St}_G(n+1)|=|G_n|\cdot |\mathrm{St}_G(n-1):\mathrm{St}_G(n)|^m.
    \end{align}
    Then, for any $1\le i\le m$ and any $A\in \alpha_s^n\vee \mathcal{T}_i^{-1}(\alpha_s^n)$, \cref{align: form of the sets in the partition} together with regular branchness of $G$ over $\mathrm{St}_G(n-1)$ yields
    $$\mu(A)=\frac{|\mathrm{St}_G(n-1):\mathrm{St}_G(n)|^{m-1}}{|G_{n+1}|},$$
    and thus 
    \begin{align*}
        -\log (\mu(A))= \log|G_{n+1}|-(m-1)\log|\mathrm{St}_G(n-1):\mathrm{St}_G(n)|.
    \end{align*}
    Again, \cref{align: form of the sets in the partition} and regular branchness over $\mathrm{St}_G(n-1)$ also yield
    \begin{align*}
        \# (\alpha_s^n\vee \mathcal{T}_i^{-1}(\alpha_s^n))=|G_n|\cdot |\mathrm{St}_G(n-1):\mathrm{St}_G(n)|.
    \end{align*}
    Hence, for any $A\in \alpha_s^n$ and any $1\le i\le m$, we get
    $$\# (\alpha_s^n\vee \mathcal{T}_i^{-1}(\alpha_s^n))\cdot \mu(A)=\frac{|G_n|\cdot|\mathrm{St}_G(n-1):\mathrm{St}_G(n)|^{m}}{|G_{n+1}|}=1.$$
    Therefore, we obtain
    \begin{align*}
        \sum_{i=1}^m H\big(\alpha_s^n\vee \mathcal{T}_i^{-1}(\alpha_s^n)\big)=&-\sum_{i=1}^m\sum_{A\in \alpha_s^n\vee \mathcal{T}_i^{-1}(\alpha_s^n)}\mu(A)\log(\mu(A))\\
        =& m \cdot \# (\alpha_s^n\vee \mathcal{T}_i^{-1}(\alpha_s^n))\cdot\big(-\mu(A)\log (\mu(A))\big) \\
        =&m(-\log(\mu(A)))\\
        =& m \big(\log|G_{n+1}|-(m-1) \log|\mathrm{St}_G(n-1):\mathrm{St}_G(n)|\big).
    \end{align*}
    Putting all the above together and applying \cref{align: reg branch order condition} again, we obtain
    \begin{align*}
        F(T,\alpha_s^n)=&(1-2m)H(\alpha_s^n)+\sum_{i=1}^m H\big(\alpha_s^n\vee \mathcal{T}_i^{-1}(\alpha_s^n)\big)\\
        =&(1-2m)\log|G_n|+m\log|G_{n+1}|-m(m-1)\log|\mathrm{St}_G(n-1):\mathrm{St}_G(n)|\\
        =& \big(\log|G_n|+m\log|\mathrm{St}_G(n-1):\mathrm{St}_G(n)|\big)-m\log|G_n|\\
        &+m \big(-\log|G_n|-m\log|\mathrm{St}_G(n-1):\mathrm{St}_G(n)|+\log|G_{n+1}|\big)\\
        =&\log|G_{n+1}|-m\log|G_n|\\
        =& \log|G_1|-\big(m\log|G_n|-\log|G_{n+1}|+\log|G_1|\big)\\
        =& \log|G_1|-r_{n+1}(G),
    \end{align*}
    by the definition of the sequence $\{r_n(G)\}_{n\ge 1}$.
\end{proof}

\begin{proof}[Proof of \cref{Theorem: f-invariant of fractal of finite type}]
    Note that by \cite[Theorem 3.7]{JorgeSpectra} we may assume $G$ is of finite type of depth $D$ for some $d\ge 1$. Then, the result follows directly from \cref{lemma: The functions F}. Indeed, by \cite[Theorem~3.5]{JorgeSpectra} we have $r_n(G)=r_{D}(G)$ for every $n\ge D$ and thus
    \begin{align*}
        f(G)=&\inf_{n\ge D} F(T,\alpha_s^n)=\inf_{n\ge D} \log|G_1|-r_{n+1}(G)=\log|G_1|-r_{D}(G)=F(T,\alpha_s^D)
    \end{align*}
    as $\alpha_s^D$ is a generating partition. The process $(G,\mu_G,T,\mathcal{T},\alpha_s^D)$ is Markov by \cref{theorem: Markov characterization} as $f(G)=F(T,\alpha_s^D)$.
\end{proof}

\subsection{The Hausdorff dimension of self-similar groups}

As $\mathrm{Aut}~T$ is a profinite group with respect to the level-stabilizer filtration $\{\mathrm{St}(n)\}_{n\ge 1}$, one may define a metric $d:\mathrm{Aut}~T\times \mathrm{Aut}~T\to [0,\infty)$ given by
$$d(g,h)=\inf_{n\ge 1}\{|\mathrm{Aut}~T:\mathrm{St}(n)|^{-1}\mid gh^{-1}\in \mathrm{St}(n)\}$$
for any pair of distinct elements $g,h\in \mathrm{Aut}~T$. This metric induces a Hausdorff dimension on the closed subsets of $\mathrm{Aut}~T$. Given a closed subgroup $G\le \mathrm{Aut}~T$, its Hausdorff dimension in $\mathrm{Aut}~T$ coincides with its lower box dimension \cite{Abercrombie,BarneaShalev}, and it is given by the following lower limit: 
$$\mathrm{hdim}_{\mathrm{Aut}~T}(G)=\underline{\mathrm{dim}}_B(G)=\liminf_{n\to\infty}\frac{\log|G:\mathrm{St}_G(n)|}{\log |\mathrm{Aut}~T:\mathrm{St}(n)|}.$$
In fact, if $G$ is self-similar, then the limit above exists by \cite[Theorem B and Proposition 1.1]{JorgeSpectra}. Therefore, we see that
\begin{align*}
    \mathrm{hdim}_{\mathrm{Aut}~T}(G)&=\lim_{n\to\infty}\frac{\log|G:\mathrm{St}_G(n)|}{\log |\mathrm{Aut}~T:\mathrm{St}(n)|}= \frac{m-1}{\log (m!)}\cdot \lim_{n\to\infty}\frac{\log|G_n|}{m^n-1}\\
    &=C(m)\cdot \lim_{n\to\infty}\frac{\log|G_n|}{m^n}
\end{align*}
for some constant $C(m)$, which only depends on $m$. In other words, the Hausdorff dimension of a self-similar group $G\le \mathrm{Aut}~T$ is completely determined by the limit of the sequence $\{m^{-n}\log|G_n|\}_{n\ge 1}$.

\subsection{A criterion for measure-conjugacy}

If $G\le \mathrm{Aut}~T$ is a group of finite type, we shall write $\mathrm{depth}(G)$ for its depth. Now we proceed with the proof of \cref{Theorem: classification}. The proof is based on the following key observation:

\begin{lemma}
\label{lemma: quotients are of the same size}
    Let $G,H\le \mathrm{Aut}~T$ be two fractal groups of finite type such that:
    \begin{enumerate}[\normalfont(i)]
        \item $f(G)=f(H)$;
        \item $\mathrm{hdim}_{\mathrm{Aut}~T}(G)=\mathrm{hdim}_{\mathrm{Aut}~T}(H)$.
    \end{enumerate}
    Then, for every $n\ge D$ we have
    $$\log|G_n|=\log|H_n|,$$
    where $D:=\max\{\mathrm{depth}(G),\mathrm{depth}(H)\}$.
\end{lemma}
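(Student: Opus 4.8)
The plan is to translate both hypotheses into statements about the two integer sequences $a_n:=\log|G_n|$ and $b_n:=\log|H_n|$ and then play them off against one another. First I would record the output of \cref{lemma: The functions F}: since $D\ge\mathrm{depth}(G),\mathrm{depth}(H)$, the explicit computation there gives $F(T,\alpha_s^n)=\log|G_{n+1}|-m\log|G_n|=a_{n+1}-ma_n$ for all $n\ge D$, and likewise $F(T,\beta_s^n)=b_{n+1}-mb_n$ for $H$. Because $r_n(G)$ (resp. $r_n(H)$) has already stabilized for $n\ge D$, this common value is precisely the $f$-invariant, so that as in the proof of \cref{Theorem: f-invariant of fractal of finite type} one has $f(G)=a_{n+1}-ma_n$ and $f(H)=b_{n+1}-mb_n$ for every $n\ge D$.

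Next I would exploit hypothesis (i). Writing $c_n:=a_n-b_n$ and subtracting the two identities above, the shared value $f(G)=f(H)$ cancels and leaves the linear recurrence $c_{n+1}=mc_n$ for every $n\ge D$, whence $c_n=m^{\,n-D}c_D$ for all $n\ge D$. In other words, the equality of $f$-invariants forces the difference $\log|G_n|-\log|H_n|$ to grow geometrically with ratio $m$ unless it already vanishes at level $D$.

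Finally I would bring in hypothesis (ii) to annihilate this geometric growth. Since $G$ and $H$ are self-similar, the limits $\lim_n a_n/m^n$ and $\lim_n b_n/m^n$ both exist, and by the formula for the Hausdorff dimension recalled above (in which the constant $C(m)$ is nonzero and depends only on $m$), hypothesis (ii) is equivalent to $\lim_n a_n/m^n=\lim_n b_n/m^n$, that is, $\lim_n c_n/m^n=0$. But the recurrence gives $c_n/m^n=c_D/m^D$ for all $n\ge D$, a constant sequence, so its limit equals $c_D/m^D$; equating this to $0$ yields $c_D=0$, and then $c_n=m^{\,n-D}c_D=0$ for every $n\ge D$, which is exactly $\log|G_n|=\log|H_n|$.

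I do not expect a serious obstacle: the two hypotheses constrain $c_n$ in incompatible ways — one forces geometric divergence of $c_n$, the other forces $c_n/m^n\to 0$ — and the only sequence satisfying both is the zero sequence. The two points requiring genuine care are the index bookkeeping when extracting $a_{n+1}-ma_n$ from \cref{lemma: The functions F} (so that the recurrence is stated at the correct level), and the appeal to self-similarity to ensure the $\liminf$ defining $\mathrm{hdim}_{\mathrm{Aut}~T}$ is an honest limit, which is what legitimizes splitting $\lim_n c_n/m^n$ as the difference of the two individual limits.
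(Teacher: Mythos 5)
Your proof is correct and follows essentially the same route as the paper's: both extract the recurrence $\log|G_{n+1}|=m\log|G_n|+f(G)$ for $n\ge D$ from \cref{lemma: The functions F} together with the stabilization of $r_n$, and both use the existence of the limit $\lim_n \log|G_n|/m^n$ (i.e.\ the Hausdorff dimension) to force equality at level $D$. The only cosmetic difference is that you pass to the difference sequence $c_n=\log|G_n|-\log|H_n|$ and solve a homogeneous recurrence, whereas the paper solves the inhomogeneous recurrence for $\log|G_{D+k}|$ explicitly and then compares.
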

\begin{proof}
    First, note that both $\mathrm{St}_G(D-1)$ and $\mathrm{St}_H(D-1)$ are branching subgroups and thus $r_n(G)=r_D(G)$ and $r_n(H)=r_D(H)$ for every $n\ge D$ again by \cite[Theorem~3.5]{JorgeSpectra}. Then, by \cref{Theorem: f-invariant of fractal of finite type} we get 
    $$\log|G_{D+1}|=\log|G_1|+m\log|G_D|-r_{D+1}(G)=m\log|G_D| + f(G),$$
    and arguing by induction on $k\ge 1$ we obtain
    \begin{align}
        \label{align: log recursive equation}
        \log|G_{D+k}|&=m^k\log|G_D| + \frac{m^k-1}{m-1}\cdot f(G)
    \end{align}
    for any $k\ge 1$. Therefore, if $\log|G_D|=\log|H_D|$ the result follows from \cref{align: log recursive equation}, as we assumed that $f(G)=f(H)$. Furthermore, \cref{align: log recursive equation} also yields the equality $\log|G_D|=\log|H_D|$. Indeed, by \cref{align: log recursive equation}, the Hausdorff dimensions of $G$ and $H$ in $\mathrm{Aut}~T$ are equal if and only if $\log|G_D|=\log|H_D|$, as we have $f(G)=f(H)$ by assumption.
\end{proof}

\begin{proof}[Proof of \cref{Theorem: classification}]
        Let $G,H\le \mathrm{Aut}~T$ be fractal and of finite type and let $D:=\max\{\mathrm{depth}(G),\mathrm{depth}(H)\}$. Then the assumptions of \cref{lemma: quotients are of the same size} are satisfied and we obtain that
         $$\log|G_n|=\log|H_n|$$
        for all $n\ge D$. Furthermore, since $H_n$ and $G_n$ are groups for each $n\ge 1$, the fibers of the projection maps $G_{n+1}\to G_{n}$ are all of the same size for each $n\ge 1$, namely of size $|\mathrm{St}_G(n):\mathrm{St}_G(n+1)|$. Then, as both $\mathrm{St}_G(D-1)$ and $\mathrm{St}_H(D-1)$ are branching, any bijection $f_D:G_D\to H_D$ may be extended for every $n\ge D$ to a bijection $f_n:G_n\to H_n$ in such a way that $f_n(g|_i^{n-1})=f_n(g)|_i^{n-1}$ for every $g\in G_n$ and any $1\le i\le m$.
        
        Since the Haar measure is left-invariant and the quotients $G_n$ and $H_n$ are of the same size for each $n\ge D$, these bijections are measure-preserving and they form a coherence sequence of measure-preserving bijections. Thus, there exists a measure-preserving bijection $f:=\varprojlim f_n:G\to H$ whose inverse is also measure-preserving. By construction
        $$f(\mathcal{T}_v(g))=\mathcal{T}_v f(g)$$
        for any $v\in T$. Furthermore, one has that $f$ induces a bijection between $\alpha_s^D$ and~$\beta_s^D$, where $\alpha_s$ and $\beta_s$ are the standard partitions of $(G,\mu_G)$ and $(H,\mu_H)$ respectively.  Thus, we get that the processes $(G,\mu_G,T,\mathcal{T},\alpha_s^D)$ and $(H,\mu_H,T,\mathcal{T},\beta_s^D)$ are isomorphic.  
\end{proof}

\section{Applications and examples}
\label{section: applications}

We conclude the paper by giving some further applications of the main results in \cref{section: the f-invariant} and working out an example in the $p$-adic tree.

\subsection{Universality of the groups $G_\mathcal{S}$}

Let us assume that $G\le W_q$ for some prime power $q\ge 2$, where
$$W_q:=\{g\in \mathrm{Aut}~T\mid g|_v^1\in \langle \sigma\rangle\le \mathrm{Sym}(q)\}$$
for $\sigma:=(1\dotsb q)\in \mathrm{Sym}(q)$.

 For $G\le \mathrm{Aut}~T$, recall from \cite{JorgeSpectra} (see also \cite{GeneralizedBasilica}) the definition of the sequence $\{s_n(G)\}_{n\ge 1}$:
$$s_n(G):=r_{n+1}(G)-r_n(G)=m \log |\mathrm{St}_G(n-1):\mathrm{St}_G(n)|-\log|\mathrm{St}_G(n):\mathrm{St}_G(n+1)|.$$

Now, note that the same argument as in \cite[Lemma 5.6 and Proposition~5.7(i)]{JorgeSpectra} shows that the sequence $\{s_n(G)\}_{n\ge 1}$ of any self-similar level-transitive group $G$ is an almost $q$-expansion in the sense of \cite{JorgeSpectra}. Thus, for $G\le W_q$ fractal and of finite type, its sequence $\{s_n(G)\}_{n\ge 1}$ is an almost $q$-expansion. Hence, by \cite[Proposition~5.7(i)]{JorgeSpectra}, there exists a super strongly fractal and level-transitive closed subgroup $G_\mathcal{S}\le W_q$ such that $s_n(G_\mathcal{S})=s_n(G)$ for all $n\ge 1$. In particular, by \cite[Theorem~3.5]{JorgeSpectra} and \cref{theorem: equivalence finite type}, the group $G_\mathcal{S}$ is of finite type.

If $G,H\le \mathrm{Aut}~T$ are two fractal groups of finite type such that $s_n(G)=s_n(H)$ for every $n\ge 1$, then $r_n(G)=r_n(H)$ for every $n\ge 1$ too, so 
$$\mathrm{hdim}_{\mathrm{Aut}~T}(G)=\mathrm{hdim}_{\mathrm{Aut}~T}(H)$$
by \cite[Theorem B]{JorgeSpectra}. Then $(G,\mu_G,T,\mathcal{T},\alpha_s^D)$ and $(H,\mu_H,T,\mathcal{T},\beta_s^D)$ are isomorphic by \cref{Theorem: classification}. Thus, by the above discussion, any Markov-process $(G,\mu_G,T,\mathcal{T},\alpha_s^D)$ with $G$ fractal and of finite type is isomorphic to a strongly mixing (in the sense of~\cite{JorgeCyclicity}) Markov-process $(G_\mathcal{S},\mu_{G_\mathcal{S}},T,\mathcal{T},\alpha_s^D)$. 

Note that the above yields countably many non-isomorphic Markov-processes $(G,\mu_g,T,\mathcal{T},\alpha_s^D)$ over each non-abelian free semigroup $T$ of rank a prime power~$q$. Note that there are at most countably many such processes as there are countably many groups of finite type acting on the $q$-adic tree for each prime power $q\ge 2$.

\subsection{Non-fractal groups of finite type}

    Note that fractality of $G$ is not used in the proof of \cref{lemma: The functions F}. The only reason to consider fractal groups is so that we obtain a measure-preserving dynamical system $(G,\mu_G,T,\mathcal{T})$ by \cref{theorem: fractal is dyn sys}, so that we may talk about the associated Markov process. However, we may define the $f$-invariant of a group of finite type in purely group-theoretic terms by \cref{Theorem: f-invariant of fractal of finite type}, i.e. as
    $$f(G):=\log|G_1|-r_{D}(G).$$
    Then, the proof of \cref{lemma: quotients are of the same size} still holds if we drop the fractality condition on $G$ and we obtain the following:

    \begin{corollary}
        Let $G,H\le \mathrm{Aut}~T$ be two groups of finite type. Then, the following are equivalent:
        \begin{enumerate}[\normalfont(i)]
            \item For every $n\ge D$ we get $\log|G_n|=\log|H_n|;$
            \item we have both equalities 
            $$f(G)=f(H) \quad\text{and}\quad \mathrm{hdim}_{\mathrm{Aut}~T}(G)=\mathrm{hdim}_{\mathrm{Aut}~T}(H).$$
        \end{enumerate}
    \end{corollary}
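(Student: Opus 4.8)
The plan is to derive both implications from the single recursive identity \cref{align: log recursive equation}, which expresses $\log|G_{D+k}|$ as an affine function of $\log|G_D|$ with slope and intercept governed by $m$, $k$ and the $f$-invariant $f(G)$. The first point I would stress is that neither this recursion nor \cref{lemma: The functions F} uses fractality: the proof of \cref{lemma: The functions F} only invokes that $G$ is of finite type, which via \cref{theorem: equivalence finite type} yields regular branchness over $\mathrm{St}_G(D-1)$ and hence the stabilization $r_n(G)=r_D(G)$ for all $n\ge D$. Adopting the purely group-theoretic definition $f(G):=\log|G_1|-r_D(G)$, the recursion \cref{align: log recursive equation} therefore holds verbatim for any group of finite type.

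For (i) $\Rightarrow$ (ii), I would simply read off both invariants from the tail of the sequence $\{\log|G_n|\}_{n\ge D}$. Taking $k=1$ in \cref{align: log recursive equation} gives $f(G)=\log|G_{D+1}|-m\log|G_D|$, an expression involving only indices $\ge D$; hence the hypothesis $\log|G_n|=\log|H_n|$ for all $n\ge D$ forces $f(G)=f(H)$. For the Hausdorff dimension, recall from the discussion preceding \cref{lemma: quotients are of the same size} that $\mathrm{hdim}_{\mathrm{Aut}~T}(G)=C(m)\lim_{n\to\infty}m^{-n}\log|G_n|$, a tail limit depending only on the values $\log|G_n|$ for large $n$; their equality therefore gives equal Hausdorff dimension.

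The reverse implication (ii) $\Rightarrow$ (i) is exactly \cref{lemma: quotients are of the same size} with the fractality hypothesis dropped, and I would run the same argument. Dividing \cref{align: log recursive equation} by $m^{D+k}$ and letting $k\to\infty$ expresses the Hausdorff dimension as an affine function of $\log|G_D|$, namely $C(m)\big(m^{-D}\log|G_D|+\tfrac{f(G)}{(m-1)m^D}\big)$. Since $f(G)=f(H)$, equality of Hausdorff dimensions is then equivalent to $\log|G_D|=\log|H_D|$; feeding this base case back into \cref{align: log recursive equation}, together with $f(G)=f(H)$, propagates the equality $\log|G_n|=\log|H_n|$ to every $n\ge D$ by induction on $k$.

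The only genuine point to check — and the step I expect to carry the weight — is the claim that \cref{lemma: The functions F} and the recursion \cref{align: log recursive equation} survive the removal of fractality. This is essentially bookkeeping: one verifies that the computation of $F(T,\alpha_s^n)$ rests only on the order relation \cref{align: reg branch order condition} coming from regular branchness over $\mathrm{St}_G(D-1)$, and that the inductive derivation of \cref{align: log recursive equation} in the proof of \cref{lemma: quotients are of the same size} never appeals to the measure-preserving dynamical system, so it remains valid once $f(G)$ is defined group-theoretically as in \cref{Theorem: f-invariant of fractal of finite type}. Everything else is then a direct consequence of the affine shape of \cref{align: log recursive equation}.
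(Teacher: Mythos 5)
Your proposal is correct and follows essentially the same route as the paper, which simply observes that \cref{lemma: The functions F} and the proof of \cref{lemma: quotients are of the same size} never use fractality once $f(G):=\log|G_1|-r_D(G)$ is taken as the definition. Your explicit verification of (i) $\Rightarrow$ (ii) via $f(G)=\log|G_{D+1}|-m\log|G_D|$ and the tail-limit formula for the Hausdorff dimension is a correct filling-in of a direction the paper leaves implicit.
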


\subsection{An example: GGS-groups}

We first fix some notation. We define the map $\psi:\mathrm{Aut}~T\to (\mathrm{Aut}~T\times\overset{m}{\ldots}\times \mathrm{Aut}~T)\rtimes \mathrm{Sym}(m)$ via
$$g\mapsto (g|_1,\dotsc, g|_m)g|_\emptyset^1.$$
We shall use the map $\psi$ to define automorphisms of $T$ recursively.

Recall also from \cite[Equation (3.1)]{JorgeSpectra} that if a group $G\le \mathrm{Aut}~T$ is regular branch over $\mathrm{St}_G(D-1)$ then
$$r_{D}(G)=\log|G\times\overset{m}{\dotsb}\times G:\psi(\mathrm{St}_{G}(1))|.$$

Let us fix an odd prime $p\ge 3$. Let $\alpha=(\alpha_1,\dotsc,\alpha_{p-1})\in \mathbb{F}_p^{p-1}\setminus\{0\}$ be the so-called \textit{defining vector}. Then, the \textit{GGS-group} $G_\alpha$ is defined as the group $G_\alpha\le W_p$ generated by the rooted automorphism $\psi(a)=(1,\dotsc,1)\sigma$, where $\sigma:=(1\,2\,\dotsb \, p)\in \mathrm{Sym}(p)$, and the directed automorphism $b$ defined recursively as
$$\psi(b)=(a^{\alpha_1},\dotsc, a^{\alpha_{p-1}},b).$$
The group $G_\alpha$ is always strongly fractal. Indeed, note that $G_\alpha$ is level-transitive and that the projections of $b$ and of an appropriate conjugate of $b$ by a power of $a$ at the vertex $p$ generate $G_\alpha$.

If $\alpha$ is not the constant vector, then $G_\alpha$ is branch and thus its closure in $W_p$ is a fractal group of finite type by \cite[Theorem 3.7]{JorgeSpectra}.

The logarithmic orders of the congruence quotients of GGS-groups, and thus the Hausdorff dimensions of their closures in $W_p$, were computed by Fernández-Alcober and Zugadi-Reizabal in \cite{GGSHausdorff}. 
In the proof of \cite[Theorem 3.7]{GGSHausdorff}, the authors proved that if $\alpha$ is not symmetric then
$$r_{D}(G_\alpha)=\log|G_\alpha\times\overset{p}{\dotsb}\times G_\alpha:\psi(\mathrm{St}_{G_\alpha}(1))|=p.$$

Similarly, one can extract from \cite[Theorems 2.1 and 2.14 and Lemma 3.5]{GGSHausdorff} that if $\alpha$ is symmetric but non-constant then
\begin{align*}
    r_{D}(G_\alpha)&=\log|G_\alpha\times\overset{p}{\dotsb}\times G_\alpha:\psi(\mathrm{St}_{G_\alpha}(1))|\\
    &=p\log|G_\alpha:G_\alpha'|+\log|G_\alpha'\times\overset{p}{\ldots}\times G_\alpha':\psi(\mathrm{St}_{G_\alpha}(1)')|-\log|\mathrm{St}_{G_\alpha}(1):\mathrm{St}_{G_\alpha}(1)'|\\
    &=2p+1-p\\
    &=p+1.
\end{align*}
Therefore, \cref{Theorem: f-invariant of fractal of finite type} yields the $f$-invariant of every non-constant GGS-group acting on the $p$-adic tree:
\begin{corollary}
\label{corollary: f invariant GGS}
    Let $\alpha$ be a non-constant defining vector and let us consider the GGS-group $G_\alpha\le W_p$. Then:
    \begin{enumerate}[\normalfont(i)]
        \item if $\alpha$ is not symmetric, we get $f(G_\alpha)=1-p$;
        \item if $\alpha$ is symmetric, we get $f(G_\alpha)=-p$.
    \end{enumerate}
\end{corollary}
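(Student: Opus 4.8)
The plan is to read the corollary off \cref{Theorem: f-invariant of fractal of finite type}, which for a fractal branch profinite group gives $f=\log|G_1|-r_D$. Since $\alpha$ is non-constant, the closure $\overline{G_\alpha}\le W_p$ is fractal and of finite type by \cite[Theorem 3.7]{JorgeSpectra}, so the theorem applies. Passing to the closure leaves every finite congruence quotient unchanged, so $|G_1|$ and $r_D(G_\alpha)$ may be evaluated on the discrete group $G_\alpha$ itself, and we are reduced to the single identity $f(G_\alpha)=\log|G_1|-r_D(G_\alpha)$.

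First I would compute $\log|G_1|$. The rooted generator $a$ acts on the first level as the $p$-cycle $\sigma$, while the directed generator $b$ lies in $\mathrm{St}_{G_\alpha}(1)$, since $\psi(b)=(a^{\alpha_1},\dotsc,a^{\alpha_{p-1}},b)$ carries the trivial root permutation. Hence the first congruence quotient is exactly the cyclic group $\langle\sigma\rangle\le \mathrm{Sym}(p)$ of order $p$, giving $\log|G_1|=1$ for the base-$p$ logarithm employed throughout.

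It then remains only to substitute the two values of $r_D(G_\alpha)$ already obtained above from \cite{GGSHausdorff}: one has $r_D(G_\alpha)=p$ when $\alpha$ is non-symmetric, and $r_D(G_\alpha)=p+1$ when $\alpha$ is symmetric but non-constant. These yield $f(G_\alpha)=1-p$ and $f(G_\alpha)=1-(p+1)=-p$ respectively, establishing (i) and (ii).

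There is no serious obstacle here, as the substantive computation of the index $|G_\alpha\times\overset{p}{\dotsb}\times G_\alpha:\psi(\mathrm{St}_{G_\alpha}(1))|$ in each case is imported wholesale from \cite{GGSHausdorff}. The only points demanding attention are bookkeeping ones: verifying that $b$ stabilizes the first level so that $|G_1|=p$ and nothing larger, and keeping the logarithm base fixed at $p=m$ so that the pure-number inputs $r_D\in\{p,p+1\}$ and $\log|G_1|=1$ combine to the stated values.
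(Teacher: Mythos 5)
Your proposal is correct and follows essentially the same route as the paper: apply \cref{Theorem: f-invariant of fractal of finite type} to the closure of $G_\alpha$ (fractal and of finite type by \cite[Theorem 3.7]{JorgeSpectra}), note $\log|G_1|=\log p=1$ in base $p$, and substitute the values $r_D(G_\alpha)=p$ (non-symmetric) and $r_D(G_\alpha)=p+1$ (symmetric, non-constant) imported from \cite{GGSHausdorff}. The bookkeeping points you flag (that $b\in\mathrm{St}_{G_\alpha}(1)$ so $|G_1|=p$, and the base-$p$ normalization of the logarithm) are exactly the ones implicit in the paper's own derivation.
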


In other words, the $f$-invariant of a GGS-group acting on the $p$-adic tree distinguishes precisely whether the non-constant defining vector is symmetric or not. Since this is a measure-conjugacy invariant we see that the Markov-processes associated to a GGS-group given by a non-constant symmetric defining vector cannot be isomorphic to one associated to a non-symmetric defining vector. However, for those symmetric (resp. not symmetric) non-constant defining vectors whose circulant matrix (see \cite{GGSHausdorff}) have the same rank, the corresponding GGS-groups have the same Hausdorff dimension in $W_p$ by \cite[Theorem 3.7]{GGSHausdorff}. Therefore, \cref{Theorem: classification} tells us that, in this case, the associated Markov processes are isomorphic.



\bibliographystyle{unsrt}

\end{document}